\documentclass[12pt, letterpaper]{article}

\usepackage[margin=1in]{geometry}

\usepackage{setspace}
\usepackage[utf8]{inputenc}
\usepackage[T1]{fontenc}
\usepackage{lmodern} 
\usepackage{microtype} 

\usepackage{amsmath}
\usepackage{amssymb}
\usepackage{amsthm}
\usepackage{mathtools}
\usepackage{bbm} 

\usepackage{graphicx}
\usepackage{booktabs} 
\usepackage{caption}
\usepackage{subcaption}
\usepackage{float}

\usepackage{enumitem}
\setlist[itemize]{noitemsep, topsep=0pt}

\usepackage{hyperref}
\hypersetup{
    colorlinks=true,
    linkcolor=blue,      
    citecolor=blue,      
    urlcolor=blue,       
    pdfborder={0 0 0},   
    pdftitle={Optimal Allocation of Scarce Human Override Capacity},
    pdfauthor={Author Name}
}

\usepackage[authoryear]{natbib}
\usepackage{titlesec}
\titleformat{\section}{\Large\bfseries}{\thesection}{1em}{}
\titleformat{\subsection}{\large\bfseries}{\thesubsection}{1em}{}

\newtheorem{theorem}{Theorem}

\newtheorem{proposition}{Proposition}

\newtheorem{assumption}{Assumption}

\theoremstyle{remark}
\newtheorem{remark}{Remark}

\newcommand{\E}{\mathbb{E}}
\newcommand{\Prob}{\mathbb{P}}
\newcommand{\R}{\mathbb{R}}
\newcommand{\N}{\mathbb{N}}
\newcommand{\Ind}{\mathbbm{1}}
\newcommand{\calS}{\mathcal{S}}
\newcommand{\ThetaSet}{\Theta}
\newcommand{\calQ}{\mathbb{N}_0}

\newcommand{\OpT}{\mathcal{T}}
\newcommand{\OpM}{\mathcal{M}}

\title{\LARGE \bf Operating Imperfect AI: Reliability Drift and Human Congestion}

\author{
    \textbf{Ziyao Wang, Svetlozar T Rachev} \\
    Department of Mathematics and Statistics \\
    Texas Tech University \\
    \texttt{ziywang@ttu.edu}
}

\date{Current Draft: \today}

\begin{document}

\maketitle

\begin{abstract}

The deployment of machine learning in high-stakes services relies on ``human-in-the-loop'' architectures to mitigate algorithmic uncertainty. However, existing static policies fail to address a fundamental tension: algorithms suffer from stochastic ``reliability drift,'' while human override capacity is scarce and congestible. We formulate the management of such systems as a dynamic queueing control problem. The system state is defined by the tuple (queue backlog, reliability regime), and the control variable is a state-dependent risk threshold. We prove that the optimal escalation policy is driven by the endogenous ``Shadow Price of Capacity.'' We establish two key structural monotonicity results: (i) \textit{Congestion Shedding}, where the threshold rises with backlog to sacrifice marginal accuracy for responsiveness; and (ii) \textit{Safety Buffering}, where the threshold lowers during drift to use the queue as a ``risk capacitor.'' Furthermore, we identify a critical ``Capacity Phase Transition'' in the arrival-drift parameter space, beyond which no policy can maintain safety standards without causing structural system failure (infinite queues). Our results provide rigorous operational rules for managing the interface between imperfect algorithms and congested experts.

\vspace{0.5cm}
\noindent \textbf{Keywords:} Human-in-the-Loop AI; Service Operations; Reliability Drift; Shadow Price of Capacity; Strategic Capacity Rationing.
\end{abstract}


\section{Introduction}

\subsection{The Operational Reality of AI Deployment}

Over the past decade, the integration of Artificial Intelligence and Machine Learning into service operations has transitioned from experimental pilots to mission-critical infrastructure.
In industries as diverse as financial services, healthcare, e-commerce, and digital platform governance, predictive algorithms now handle the vast majority of transactional decisions.
For instance, in credit card fraud detection, algorithms process millions of transactions per second, deciding instantaneously whether to approve or decline a payment.
In social media content moderation, classifiers scan billions of posts daily to identify hate speech or misinformation.
In radiology, deep learning models analyze medical images to flag potential anomalies for physician review.

However, contrary to the popular narrative of full automation, the most successful and robust AI deployments are invariably ``hybrid'' or ``human-in-the-loop'' (HITL) systems.
In these architectures, the AI system does not replace human labor entirely.
Instead, it functions as a triage mechanism: it automates the processing of ``easy'' or ``clear-cut'' cases (where the model's confidence is high), while escalating ``hard,'' ``ambiguous,'' or ``edge-case'' tasks to human experts for manual adjudication.
This hybrid structure seeks to leverage the complementary strengths of humans and machines. 
As noted by \citet{Agrawal2018}, the deployment of AI can be conceptualized as a drop in the cost of prediction, which effectively shifts the value toward judgment—a distinctively human capability.

\subsection{The Economic Gap: From Fixed Costs to Shadow Prices}

While the engineering of predictive models has received immense attention, the \textit{operational control} of the workflow remains a critical blind spot.
The prevailing approach in the ML literature, often framed as ``Learning to Defer'' \citep{Madras2018, Raghu2019}, typically assumes that the cost of querying a human expert is a fixed constant $c$.
We argue that this assumption, while valid in crowdsourcing, is fundamentally flawed for enterprise operations.
In settings like fraud investigation units or hospital departments, the expert team is a fixed resource with a finite service rate.
Therefore, the true cost of utilization is not a fixed wage, but the \textit{congestion externality}: every task added to the queue increases the delay for all subsequent tasks.
This creates a dynamic ``Shadow Price of Capacity'' that fluctuates with the system's backlog—a dimension largely ignored in current literature.

\subsection{The Core Tension: Congestion and Reliability Drift}

This paper is motivated by the failure of static policies to cope with the dynamic realities of modern service operations.
We formulate the management of hybrid intelligence systems as a stochastic control problem characterized by the tension between two opposing forces:

\textbf{1. The Congestion Constraint.}
Human experts are a scarce and rigid resource.
Unlike cloud computing instances which can be auto-scaled in seconds, a team of analysts or radiologists has a finite service rate.
Escalating a task incurs a congestion externality.
In many applications, the resulting delay is extremely costly—leading to customer churn in banking or adverse health outcomes in medicine.
Thus, the decision to escalate is not merely a question of statistical accuracy; it is an economic question of capacity rationing.
A policy that is optimal for a single task in isolation (``escalate to minimize error'') may be disastrous for the system as a whole (``escalate until the queue explodes'').

\textbf{2. The Reliability Drift Constraint.}
The second challenge is that the performance of the automated component is not stationary.
Real-world ML systems operate in dynamic environments where the underlying data distribution shifts over time—a phenomenon known as ``concept drift'' or ``reliability drift'' \citep{Gama2014}.
We model this not just as covariate shift, but as a \textit{regime switch} in the error profile.
When a system enters a drift state, the algorithm makes more errors for the same set of inputs.
The natural, safety-conscious response is to rely more heavily on human supervision.
However, this creates a cruel operational paradox: precisely when the system needs human oversight the most (during drift), the demand for human capacity spikes, potentially overwhelming the finite expert pool.

\subsection{Research Questions and Methodology}

Motivated by these challenges, we seek to answer the following research questions:
\begin{itemize}
    \item \textbf{Q1 (Optimal Control):} How should a decision-maker dynamically adjust the escalation threshold in response to real-time fluctuations in queue backlog and system reliability?
    \item \textbf{Q2 (Structural Properties):} Can we characterize the optimal policy using simple, interpretable rules? Specifically, how does the optimal risk threshold vary with congestion levels (queue length) and drift severity?
    \item \textbf{Q3 (System Stability):} What are the fundamental limits of such hybrid systems? Is there a theoretical boundary in the parameter space beyond which no policy can maintain safety?
\end{itemize}

To answer these questions, we formulate a continuous-time stochastic control model (CTMDP).
We employ uniformization techniques \citep{Lippman1975} to analyze the discrete-time equivalent Bellman equation.
By establishing the supermodularity of the value function, we derive threshold-based optimal policies.
Furthermore, we bridge the gap between discounted and average cost criteria using Tauberian theorems \citep{Sennott1999}, providing a rigorous foundation for managing the interface between imperfect algorithms and congested experts.


\section{Literature Review}

Our work is situated at the intersection of three distinct and mature streams of literature: (i) the operational management of human-AI collaboration, (ii) admission control in queueing systems, and (iii) decision-making under model uncertainty. In this section, we review the relevant work in each domain, highlighting the specific gaps our paper addresses.

\subsection{Human-AI Collaboration and Hybrid Intelligence}

A rapidly growing body of literature in Information Systems (IS) and Management Science examines the economic and operational implications of deploying AI alongside humans. The foundational work in this area often focuses on the \textit{complementarity} between human judgment and algorithmic prediction.

\citet{Agrawal2018} conceptualize AI as a drop in the cost of prediction, which effectively shifts the value toward judgment—a distinctively human capability. Following this, several papers have modeled the ``delegation'' or ``triage'' problem. \citet{Madras2018} and \citet{Raghu2019}, in the computer science domain, propose the ``Learning to Defer'' framework, where a classifier is trained not just to predict, but to decide whether to predict or defer to an expert. Their objective function typically minimizes a joint loss: $\min \mathcal{L} = \alpha \cdot \text{Cost}_{\text{machine}} + (1-\alpha) \cdot \text{Cost}_{\text{human}}$.

In the operations literature, \citet{Ibanez2018} empirically study how radiologists behave when assisted by AI. They find that discretionary ordering of tasks can improve throughput but note that humans may over-rely on or under-rely on algorithmic advice. \citet{Fugener2021} explore the limits of human-in-the-loop systems, highlighting that cognitive bottlenecks can negate the benefits of AI if not managed correctly. They show that showing humans the AI's confidence score can sometimes lead to biased decision-making.

\textbf{Critique and Differentiation:} A pervasive limitation in this stream of research is the modeling of human capacity. Most existing models (including \citet{Madras2018} and \citet{Raghu2019}) assume that the cost of querying a human is a fixed constant $c$. This implies either an infinite supply of human effort or a pay-per-task crowdsourcing model. In contrast, most enterprise applications involve a fixed team of experts (e.g., a fraud investigation unit) where capacity is defined by a service rate ($\mu$), not a marginal cost. In such settings, the true cost of utilization is \textit{delay}, which is non-linear and state-dependent. Our work contributes to this stream by explicitly modeling the human component as a queueing system, thereby endogenizing the cost of human intervention. We show that the decision to defer to a human should depend not only on the instance capability (``is the human better at this task?'') but also on the instantaneous system load (``can we afford the wait?'').

\subsection{Admission Control and Triage in Service Systems}

From a methodological perspective, our problem is a variation of the admission control or ``gatekeeper'' problem in queueing networks. This is a classical topic in Operations Research.

\citet{Miller1969} provided one of the earliest characterizations of optimal control in M/M/s systems with rejection options. He showed that for a system with $K$ customer classes, the optimal policy is a threshold policy on the queue length. \citet{Stidham1985} generalized these results to general queueing networks, establishing the optimality of monotonic control policies under submodular cost structures.

More recently, \citet{Alizon2009} studied strategic capacity rationing where a service provider can reject low-value customers to reserve capacity for high-value ones. \citet{Hasija2010} and \citet{Gurvich2009} examined routing and staffing in call centers, a domain that shares structural similarities with our problem (routing calls between automated IVR and human agents). \citet{Argon2009} analyzed priority assignment in emergency triage under imperfect information, focusing on how to sequence jobs when their true urgency is noisy.

\textbf{Critique and Differentiation:} While these papers provide the foundational machinery for threshold policies, they typically assume \textit{stationary} environments. The arrival process and the value/cost distributions are assumed to be drawn from fixed parameters ($\lambda, F_V$). Our model introduces a novel dimension specific to the AI context: \textit{reliability drift}. In our framework, the ``cost of rejection'' (i.e., the cost of automating a task) is not a static parameter but a stochastic process itself. This requires a controller that anticipates regime shifts. The most closely related work in this regard is by \citet{Huh2009}, who study dynamic pricing with inventory constraints, but in a different context. We extend the admission control literature by analyzing how operations must adapt when the underlying ``production technology'' (the algorithm) becomes unreliable.

\subsection{Operational Adaptation to Model Drift}

The third stream of literature concerns the management of model risk and concept drift. In the machine learning community, concept drift is typically treated as a statistical estimation problem, solved via online learning, windowing techniques, or ensemble methods \citep{Gama2014}. The operational response is usually implicitly assumed to be ``retrain the model.''

However, retraining is not instantaneous. There is often a significant lag—ranging from hours to weeks—between the detection of drift and the deployment of a patched model. During this latency period, the firm must continue to operate. Our work addresses this \textit{operational gap}. We argue that before the model can be fixed statistically, it must be managed operationally. This connects our work to the literature on robust operations and disruption management. Just as supply chain managers buffer inventory against supply disruptions \citep{Tomlin2006}, we propose buffering human capacity to protect against ``informational disruptions'' (drift). We formalize the intuition that human queues act as a safety capacitor for AI systems.

\citet{Besbes2021} study a related problem of ``optimal dual-sourcing with model uncertainty,'' where a firm sources from a known supplier or an unknown supplier. While structurally similar (choice between two sources), their focus is on \textit{learning} the unknown parameter. In our model, the drift state is dynamic (Markovian switching), requiring a policy that is not just learning-based but resilient to recurring shocks.

\subsection{Positioning and Summary}

To summarize, our work sits at the unique intersection of these three fields. We take the ``risk score'' concept from the AI literature, the ``threshold policy'' tools from the Queueing literature, and the ``regime switching'' concept from the Reliability literature to build a comprehensive theory of Hybrid System Operations.



\section{Model Formulation} \label{sec:model}

We consider a continuous-time stochastic control framework for a hybrid decision-making system. The system consists of a central decision agent (the ``controller'') who acts as a gatekeeper between an imperfect automated classification algorithm and a finite-capacity pool of human experts. The operational timeline is continuous, denoted by $t \in [0, \infty)$.

\subsection{Task Arrivals and Heterogeneous Risk Profiles}

Tasks arrive to the system according to a Poisson process with rate $\lambda > 0$. Let $\{\tau_n\}_{n \ge 1}$ denote the sequence of arrival times. Upon arrival, each task $i$ is characterized by an observable attribute vector $x_i \in \mathcal{X}$, which the automated system processes to generate a prediction $\hat{y}_i$ and a scalar \textit{risk score} $s_i \in \calS$, where $\calS = [0, 1] \subset \R$.

The risk score $s_i$ serves as a sufficient statistic for the epistemic uncertainty of the automated model regarding the specific task. In practice, this could be the calibrated probability of error (e.g., $1 - \max_k P(y=k|x)$) or a model uncertainty metric derived from Bayesian ensembles. We assume that the risk scores of arriving tasks are independent and identically distributed (i.i.d.) random variables drawn from a cumulative distribution function $F_S(s)$ with continuous density $f_S(s)$ that is strictly positive on $(0, 1)$.

\begin{remark}[Observability of Risk Scores]
We assume the risk score $s_i$ is instantly observable upon arrival. This is consistent with modern MLOps pipelines where inference (generating $s_i$) is computationally instantaneous ($< 100$ms) relative to the time scale of human review (minutes). The decision to escalate is thus made \textit{after} the model's initial pass but \textit{before} the final decision is committed.
\end{remark}

\subsection{Reliability Drift Dynamics}

A central feature of our model is that the mapping from risk scores to actual error probabilities is not constant. The automated system operates in a non-stationary environment characterized by \textit{reliability drift}. We model the system's reliability environment as a continuous-time Markov chain (CTMC) $\{\theta(t), t \ge 0\}$ taking values in a finite state space $\ThetaSet = \{1, 2, \dots, K\}$.

We order the states such that $\theta = 1$ corresponds to the ``nominal'' or ``stable'' regime, where the automated system performs as calibrated. Higher values of $\theta$ correspond to progressively severe drift regimes (e.g., covariate shift, label shift, or adversarial activity) where the reliability of the automation degrades.

The evolution of $\theta(t)$ is governed by a generator matrix $Q = [q_{ij}]$, where $q_{ij}$ denotes the transition rate from state $i$ to state $j$ for $i \neq j$, and $q_{ii} = - \sum_{j \neq i} q_{ij}$.
\begin{assumption}[Ergodicity of Drift]
The Markov chain $\theta(t)$ is irreducible and ergodic, admitting a unique stationary distribution $\pi = (\pi_1, \dots, \pi_K)$.
\end{assumption}

\subsubsection{The Cost of Automation}
If the controller chooses to automate a task with risk score $s$ while the system is in reliability state $\theta$, an immediate expected cost $c_{\text{auto}}(s, \theta)$ is incurred. This cost captures the expected financial liability, regulatory penalty, or reputational damage associated with a potential prediction error.

We impose the following structural assumptions on the automated cost function:

\begin{assumption}[Structure of Automated Cost] \label{ass:cost_structure}
The cost function $c_{\text{auto}}: \calS \times \ThetaSet \to \R_+$ satisfies:
\begin{enumerate}
    \item[(i)] \textit{Risk Monotonicity:} For any fixed $\theta$, $c_{\text{auto}}(s, \theta)$ is strictly increasing, continuous, and convex in $s$.
    \item[(ii)] \textit{Drift Monotonicity:} For any fixed $s$, $c_{\text{auto}}(s, \theta)$ is non-decreasing in $\theta$. That is, $\theta' > \theta \implies c_{\text{auto}}(s, \theta') \ge c_{\text{auto}}(s, \theta)$.
    \item[(iii)] \textit{Supermodularity:} The cost function has increasing differences in $(s, \theta)$. For $s' > s$ and $\theta' > \theta$,
    \begin{equation}
        c_{\text{auto}}(s', \theta') - c_{\text{auto}}(s, \theta') \ge c_{\text{auto}}(s', \theta) - c_{\text{auto}}(s, \theta).
    \end{equation}
\end{enumerate}
\end{assumption}

\textbf{Economic Interpretation:}
Part (i) simply implies that the ``risk score'' is correctly labeled: higher scores mean higher expected loss.
Part (ii) defines the nature of drift: in a degraded state, the \textit{same} reported confidence score implies a higher actual risk of error. This captures the ``miscalibration'' effect common in distribution shifts.
Part (iii) is a technical condition ensuring that drift exacerbates the risk of high-uncertainty items more than low-uncertainty items. This is empirically consistent with ML behavior, where ``edge cases'' (high $s$) are the first to fail during distribution shifts.

\subsection{The Human Service Queue}

Tasks that are not automated are \textit{escalated} to a human review center. We model this facility as a single-station queue with $m$ parallel servers (human experts) and an infinite buffer.
\footnote{While we model the service process as a pooled queue, our results easily extend to multi-skill routing if the skills are nested. For clarity of exposition, we assume a homogeneous pool of experts.}

Service times are assumed to be independent and exponentially distributed with mean $1/\mu$. Thus, the aggregate service rate when there are $n$ tasks in the system is given by $\mu_m(n) = \min(n, m)\mu$. The state of the queue at time $t$ is denoted by $q(t) \in \calQ = \N_0$, representing the total number of tasks in the system (waiting plus in-service).

\subsubsection{Escalation and Holding Costs}
Escalating a task incurs two types of costs:
\begin{enumerate}
    \item \textbf{Direct Processing Cost ($c_h$):} A fixed, one-time cost representing the labor cost of the human review or the vendor fee. We assume $c_h > 0$.
    \item \textbf{Congestion Cost ($h(q)$):} A holding cost rate incurred continuously per unit of time. The function $h: \calQ \to \R_+$ captures the cost of delay (e.g., SLA violations, customer churn). We assume $h(q)$ is non-decreasing, convex in $q$, and $h(0)=0$.
\end{enumerate}

\subsection{System Dynamics and Decision Process}

At any time $t$, the system state is fully described by the pair $(q(t), \theta(t))$. Decision epochs occur at the moments of task arrivals. Let $x(t) = (q(t), \theta(t))$ denote the state process.

Upon the arrival of the $n$-th task at time $\tau_n$, the controller observes the current queue length $q(\tau_n^-)$, the reliability state $\theta(\tau_n^-)$, and the task's risk score $s_n$. The controller must instantaneously choose an action $a_n \in \{0, 1\}$:
\begin{itemize}
    \item \textbf{Automate ($a_n = 0$):} The task is processed by the algorithm. The system incurs immediate cost $c_{\text{auto}}(s_n, \theta(\tau_n^-))$. The queue length remains unchanged: $q(\tau_n^+) = q(\tau_n^-)$.
    \item \textbf{Escalate ($a_n = 1$):} The task is routed to the human queue. The system incurs immediate fixed cost $c_h$. The queue length increments: $q(\tau_n^+) = q(\tau_n^-) + 1$.
\end{itemize}

An \textit{admissible policy} $\pi$ is a sequence of decision rules $\{\delta_1, \delta_2, \dots\}$, where each $\delta_n$ maps the history of the process and current information $(q(\tau_n^-), \theta(\tau_n^-), s_n)$ to an action in $\{0, 1\}$. We restrict our attention to stationary Markovian policies, denoted by a function $\pi: \calQ \times \ThetaSet \times \calS \to \{0, 1\}$.

\subsection{The Optimization Objective}

The objective of the system manager is to minimize the total expected discounted cost over an infinite horizon. Let $\alpha > 0$ be the continuous discount rate.
Under any stationary Markovian policy $\pi$, the system state process $x(t) = (q(t), \theta(t))$ evolves as a continuous-time Markov chain. Specifically, it is a piecewise-constant process with right-continuous paths and left limits (c\`{a}dl\`{a}g). Thus, the integral in the objective function is a well-defined random variable.

For a given initial state $(q_0, \theta_0)$ and a policy $\pi$, the value function is defined as:
\begin{equation} \label{eq:objective_continuous}
    J_\pi(q_0, \theta_0) = \E \left[ \int_0^\infty e^{-\alpha t} h(q(t)) \, dt + \sum_{n=1}^\infty e^{-\alpha \tau_n} C(s_n, \theta(\tau_n^-), a_n) \bigg| q(0)=q_0, \theta(0)=\theta_0 \right]
\end{equation}
where $C(s, \theta, a)$ is the instantaneous decision cost:
\begin{equation}
    C(s, \theta, a) = (1-a) c_{\text{auto}}(s, \theta) + a \cdot c_h.
\end{equation}
The optimal value function is defined as $V(q, \theta) = \inf_{\pi} J_\pi(q, \theta)$.

\textbf{Economic Commensurability:} To meaningfully aggregate heterogeneous costs, we normalize all components into the unit of ``Expected Monetary Loss.'' Specifically:
\begin{itemize}
    \item $c_{\text{auto}}(s, \theta)$ represents the expected financial liability or compensation cost associated with a prediction error given risk score $s$ and drift state $\theta$.
    \item $c_h$ represents the direct labor cost or vendor fee for a single human review.
    \item $h(q)$ monetizes the congestion externality, capturing the rate of potential revenue loss (e.g., customer churn) or regulatory penalties due to delay.
\end{itemize}

\begin{remark}[Discounted vs. Average Cost]
We formulate the problem using discounted costs ($\alpha > 0$) to ensure the contraction property of the Bellman operator, which is essential for proving the existence and uniqueness of the solution. However, our structural results (threshold optimality and monotonicity) are robust to the choice of criterion. Following standard Tauberian theorems for queueing control \citep{Sennott1999, Arapostathis1993}, the optimal policy for the long-run average cost criterion can be viewed as the limit of the discounted cost policy as $\alpha \to 0$.
\end{remark}

\subsection{Uniformization and Discrete-Time Bellman Equation}

Since the system evolves in continuous time but decisions are made at discrete stochastic epochs (arrivals), we employ the standard \textit{uniformization} technique \citep{Lippman1975} to transform the CTMDP into an equivalent Discrete-Time Markov Decision Process (DTMDP).

Let $\Lambda$ be the uniformization rate, chosen to be strictly greater than the maximum possible transition rate out of any state:
\begin{equation}
    \Lambda \ge \lambda + m\mu + \max_{i \in \ThetaSet} |q_{ii}|.
\end{equation}
We view the system as observing ``events'' generated by a Poisson process with rate $\Lambda$. In each uniformized epoch, the state evolves according to a discrete transition kernel. Specifically, one of four events occurs:
\begin{enumerate}
    \item \textbf{Arrival (Probability $\lambda/\Lambda$):} A new task arrives. The controller applies the intervention operator $\OpM$ to decide between automation and escalation.
    \item \textbf{Service Completion (Probability $\mu_m(q)/\Lambda$):} If the queue is non-empty, a task completes service, and the state transitions from $q$ to $(q-1)^+$.
    \item \textbf{Drift Transition (Probability $q_{\theta \theta'}/\Lambda$):} The reliability state shifts from $\theta$ to $\theta'$ due to environmental changes.
    \item \textbf{Fictitious Self-Loop:} With the remaining probability, the state remains unchanged (a ``null'' transition to synchronize the clock).
\end{enumerate}

We define the discrete-time discount factor $\beta = \frac{\Lambda}{\Lambda + \alpha}$. The uniformized Bellman optimality equation for the value function $V(q, \theta)$ can be expressed in operator form as $V = \OpT V$, where $\OpT$ is a contraction mapping defined by:
\begin{align} \label{eq:dp_operator}
    \OpT V(q, \theta) = \frac{1}{\Lambda + \alpha} \bigg[ & h(q) + \lambda \E_S [\OpM V(q, \theta, S)] \nonumber \\
    &+ \mu_m(q) V((q-1)^+, \theta) \nonumber \\
    &+ \sum_{\theta' \in \ThetaSet} q_{\theta \theta'} V(q, \theta') \nonumber \\
    &+ \left( \Lambda - (\lambda + \mu_m(q) + \sum_{\theta' \neq \theta} q_{\theta \theta'}) \right) V(q, \theta) \bigg],
\end{align}
where $\OpM$ is the \textit{intervention operator}:
\begin{equation} \label{eq:intervention_operator}
    \OpM V(q, \theta, s) = \min \Big\{ c_{\text{auto}}(s, \theta) + V(q, \theta), \; c_h + V(q+1, \theta) \Big\}.
\end{equation}
This discrete-time formulation allows us to employ value iteration and inductive proofs to establish the convexity of $V$ in the subsequent section.



\section{Structural Analysis of the Optimal Policy} \label{sec:analysis}

In this section, we characterize the structure of the optimal control policy. We show that the seemingly complex infinite-dimensional optimization problem (choosing an action for every possible continuous risk score $s$) reduces to a set of state-dependent thresholds. This structure renders the policy highly interpretable and implementable in practice. We then analyze how these thresholds adapt to changes in system congestion, establishing the ``Congestion Shedding'' property.

\subsection{Optimality of Risk-Threshold Policies}

We first examine the decision problem faced by the controller at a single arrival epoch. Recall the optimality equation \eqref{eq:dp_operator}. The optimal action $a^*$ for a task with risk score $s$, given system state $(q, \theta)$, is determined by comparing the cost of automation versus escalation.

Let $\Delta_q V(q, \theta) \equiv V(q+1, \theta) - V(q, \theta)$ denote the \textit{marginal value of capacity} (or the shadow price of congestion). This term represents the incremental long-term cost incurred by the system when the queue length increases from $q$ to $q+1$.

The optimal decision rule derived from \eqref{eq:intervention_operator} is:
\begin{equation} \label{eq:decision_comparison}
    a^*(q, \theta, s) = 
    \begin{cases} 
        1 (\text{Escalate}) & \text{if } c_h + V(q+1, \theta) \le c_{\text{auto}}(s, \theta) + V(q, \theta) \\
        0 (\text{Automate}) & \text{otherwise}
    \end{cases}
\end{equation}
Rearranging the inequality, the condition for escalation becomes:
\begin{equation} \label{eq:threshold_condition}
    c_{\text{auto}}(s, \theta) \ge c_h + \Delta_q V(q, \theta).
\end{equation}
The Left-Hand Side (LHS) represents the immediate benefit of human intervention (avoided automation risk), while the Right-Hand Side (RHS) represents the total cost of human intervention (direct fee plus congestion externality).

\begin{theorem}[Optimality of Threshold Policies] \label{thm:threshold_structure}
For any system state $(q, \theta) \in \calQ \times \ThetaSet$, there exists a unique risk threshold $T^*(q, \theta) \in [0, 1] \cup \{\infty\}$ such that the optimal policy is given by:
\begin{equation}
    a^*(q, \theta, s) = \Ind \{ s \ge T^*(q, \theta) \}.
\end{equation}
Specifically, the threshold is defined by the inverse cost function:
\begin{equation} \label{eq:optimal_threshold_def}
    T^*(q, \theta) = c_{\text{auto}}^{-1}\left( c_h + \Delta_q V(q, \theta) \mid \theta \right),
\end{equation}
where $c_{\text{auto}}^{-1}(y \mid \theta) = \inf \{ s \in \calS : c_{\text{auto}}(s, \theta) \ge y \}$. If the set is empty, $T^*(q, \theta) = \infty$.
\end{theorem}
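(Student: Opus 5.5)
The argument is a pointwise comparison inside the intervention operator. It rests on the fact that, for fixed $(q,\theta)$, the right-hand side of the escalation condition \eqref{eq:threshold_condition} does not depend on $s$, while the left-hand side is strictly increasing and continuous in $s$.

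\textbf{Step 1: the optimal value function exists and the shadow price is finite.} Since $\OpT$ is a $\beta$-contraction, $V$ is its unique fixed point, and the optimal action at each arrival is obtained by attaining the minimum in \eqref{eq:intervention_operator}. The $\beta$-contraction needs a suitable weighted sup-norm, because $h$ may be unbounded.

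We also need $V(q,\theta)<\infty$ for every state, so that $\Delta_q V(q,\theta)$ is a well-defined real number. For this, compare with the policy that always automates: the queue then only drains, so the discounted cost is at most $h(q_0)/\alpha$ plus $\lambda\max_{s,\theta}c_{\text{auto}}(s,\theta)/\alpha$. Hence $V$ is finite, and $y(q,\theta):=c_h+\Delta_q V(q,\theta)\in\R$.

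\textbf{Step 2: reduce the decision to a one-dimensional level-set problem.} By \eqref{eq:decision_comparison}, escalation is optimal exactly when $c_{\text{auto}}(s,\theta)\ge y(q,\theta)$, with ties broken toward escalation as in \eqref{eq:decision_comparison}. Define $E(q,\theta)=\{s\in\calS: c_{\text{auto}}(s,\theta)\ge y(q,\theta)\}$. The task is to show that $E$ is an upper interval $[T^*,1]$, or empty, with $T^*$ as in \eqref{eq:optimal_threshold_def}.

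\textbf{Step 3: identify the threshold using Assumption~\ref{ass:cost_structure}(i).} By strict monotonicity, $s\in E$ and $s'>s$ imply $s'\in E$. By continuity of $c_{\text{auto}}(\cdot,\theta)$ on the compact interval $[0,1]$, $E$ is closed. So, if $E$ is nonempty, it equals $[\inf E,1]$, and $\inf E=c_{\text{auto}}^{-1}(y\mid\theta)$ by definition. If $E$ is empty, the convention $T^*=\infty$ gives $\Ind\{s\ge T^*\}=0$ for all $s$, which is the correct rule. Two further cases need recording:
\begin{itemize}
\item if $y\le c_{\text{auto}}(0,\theta)$, then $T^*=0$ and the controller always escalates;
\item if $y>c_{\text{auto}}(1,\theta)$, then $T^*=\infty$.
\end{itemize}

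\textbf{Step 4: uniqueness.} Uniqueness should be read as uniqueness of the threshold in $[0,1]\cup\{\infty\}$ representing the rule $s\mapsto\Ind\{s\in E\}$. Two different values $T_1<T_2$ in $[0,1]$ would disagree at $s=T_1$. A value in $[0,1]$ and $\infty$ would disagree at $s=1$. Hence $T^*$ is unique.

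Strictly, the optimal action is unique only off the indifference set $\{s: c_{\text{auto}}(s,\theta)=y\}$. By strict monotonicity this set is at most a single point, and it has $F_S$-measure zero because $F_S$ has a density. So any other optimal stationary policy agrees with the threshold rule almost surely.

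\textbf{Main obstacle.} The only genuinely delicate step is Step 1: justifying the contraction and the finiteness of $\Delta_qV$ when $h$ is unbounded. I would handle this with a weighted norm $w(q)=(1+q)^k$ chosen to dominate $h$, and check the standard bounded-drift condition for the uniformized kernel. After that, the remaining steps are elementary properties of a strictly increasing continuous function.
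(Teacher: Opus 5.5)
Your proposal is correct and follows the paper's proof. With $(q,\theta)$ fixed, the escalation cutoff $c_h+\Delta_q V(q,\theta)$ does not depend on $s$, so strict monotonicity and continuity of $c_{\text{auto}}(\cdot,\theta)$ make the escalation set either empty or of the form $[T^*,1]$.

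Your additions are sound refinements that the paper skips: finiteness of $V$ via the always-automate policy, the explicit boundary cases, and uniqueness up to the $F_S$-null tie point. One minor qualification: a polynomial weight dominates $h$ only when $h$ grows at most polynomially. This does not affect the result, because with nonnegative costs $V$ satisfies the Bellman equation and the minimizing action is optimal even without a contraction argument.
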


\begin{proof}
The proof follows directly from the properties of the cost function established in Assumption \ref{ass:cost_structure}.
Fix the state $(q, \theta)$. Let $g(s) = c_{\text{auto}}(s, \theta)$. By Assumption \ref{ass:cost_structure}(i), $g(s)$ is continuous and strictly increasing in $s$.
Let $K = c_h + \Delta_q V(q, \theta)$. Note that $K$ is independent of $s$.
The optimal policy escalates if and only if $g(s) \ge K$.
Due to the strict monotonicity of $g(\cdot)$, if the condition holds for some $s_0$, it must hold for all $s > s_0$.
The continuity of $g(\cdot)$ ensures that the set $\{s : g(s) \ge K\}$ is of the form $[T, 1]$ (or is empty). The threshold $T^*(q, \theta)$ is the unique solution to $c_{\text{auto}}(T, \theta) = K$ (or boundary values).
\end{proof}

\begin{remark}[Operational Interpretation]
Theorem \ref{thm:threshold_structure} implies that the optimal policy is a \textit{cutoff rule}. The system does not need to solve an optimization problem for every transaction. Instead, it maintains a lookup table of thresholds $T^*(q, \theta)$. Upon a task arrival, the system simply checks: ``Is the risk score higher than the current cutoff for this queue length?'' This allows for extremely low-latency implementation in production environments.
\end{remark}
\subsection{Analogy to Strategic Asset Allocation} \label{sec:finance_analogy}

The structure of our solution bears a striking mathematical isomorphism to the Strategic Asset Allocation problem in financial economics, specifically the framework of \citet{Campbell2002}. In their model, a long-term investor must allocate wealth between a risky asset (with stochastic returns) and a risk-free asset, subject to time-varying investment opportunities. We can map our hybrid intelligence operations problem to this financial framework through three key correspondences.

First, regarding the asset structure, the Automation Channel is analogous to the Risky Asset, as its ``return'' (accuracy) is volatile and regime-dependent (subject to drift). Conversely, the Human Channel is analogous to the Risk-Free Asset, offering stable, high-quality returns but incurring a ``transaction cost'' (immediate fee $c_h$) and a ``holding cost'' (congestion delay).

Second, the system's state variables map to the investor's financial status. The queue capacity slack corresponds to Wealth. When the queue is short ($q \approx 0$), the system is ``wealthy'' in attention resources and can afford to ``spend'' capacity on the safe asset (human review) to maximize quality. However, when the queue is long ($q \gg 0$), the system is effectively in ``debt'' and must conserve resources.

Third, our structural results correspond directly to the two components of optimal portfolio demand derived by \citet{Merton1971} and \citet{Campbell2002}. Congestion Shedding (Theorem \ref{thm:congestion_shedding}) mirrors the \textit{Myopic Demand} or Wealth Effect: as the queue grows (wealth decreases), the shadow price of capacity rises, forcing the optimal policy to reduce allocation to the expensive, safe asset and increase exposure to the risky asset. Safety Buffering (Theorem \ref{thm:safety_buffering}) mirrors the \textit{Intertemporal Hedging Demand}: when the controller anticipates a deterioration in the future investment environment (a shift to drift state $\theta'$), they proactively shift allocation towards the safe asset (human review) to hedge against future volatility.

This analogy suggests that managing a hybrid AI system is not merely an engineering task of minimizing errors, but an economic task of dynamic portfolio management under solvency constraints.
\subsection{Congestion Shedding: Monotonicity with Respect to Queue Length}

We now address the first key research question: how should the policy adapt to congestion? Intuitively, as the queue length $q$ increases, the marginal externality of adding another task ($\Delta_q V$) should rise. This increasing shadow price should force the controller to be more selective, raising the threshold for escalation. We term this phenomenon \textit{Congestion Shedding}.

To prove this formally, we must establish that the value function $V(q, \theta)$ is convex in $q$.

\begin{proposition}[Convexity of Value Function] \label{prop:convexity}
The optimal value function $V(q, \theta)$ is convex in $q$ for all $\theta \in \ThetaSet$. That is, the marginal cost of congestion is non-decreasing in $q$:
\begin{equation}
    \Delta_q V(q+1, \theta) \ge \Delta_q V(q, \theta), \quad \forall q \in \N_0.
\end{equation}
\end{proposition}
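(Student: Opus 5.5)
We argue by value iteration on the uniformized Bellman operator \eqref{eq:dp_operator}. Let $\mathcal{C}$ denote the class of functions $V:\calQ\times\ThetaSet\to\R$ that are convex in $q$ for every $\theta$ and satisfy the growth bound needed for $\OpT$ to be a contraction in a weighted sup-norm. Since $h$ is convex and non-decreasing, this bound is of the type $|V(q,\theta)|\le C(1+h(q)+q)$. We will show that $\OpT$ maps $\mathcal{C}$ into itself. Starting from $V_0\equiv 0$, the iterates $V_n=\OpT^n V_0$ then all lie in $\mathcal{C}$. Convexity in $q$ is a pointwise linear inequality, $V(q+2,\theta)-2V(q+1,\theta)+V(q,\theta)\ge 0$, so it survives the pointwise limit $V_n\to V$. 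Hence $V$ is convex in $q$.

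The operator splits into terms, and we check that each preserves convexity in $q$.

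\textbf{Holding cost.} $h(q)$ is convex by assumption.

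\textbf{Drift term.} $\sum_{\theta'} q_{\theta\theta'}V(q,\theta')$ together with the diagonal part of the self-loop term gives $\sum_{\theta'\ne\theta}q_{\theta\theta'}V(q,\theta')+(\Lambda-\lambda-\mu_m(q)-\sum_{\theta'\neq\theta}q_{\theta\theta'})V(q,\theta)$. The $\theta'\neq\theta$ part is a nonnegative combination of functions convex in $q$. The $q$-dependent coefficient $-\mu_m(q)$ on $V(q,\theta)$ has to be combined with the service term, so we group them before checking convexity.

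\textbf{Service and self-loop.} Write $\mu_m(q)V((q-1)^+,\theta)+(\Lambda'-\mu_m(q))V(q,\theta)$, where $\Lambda'=m\mu+\text{const}\ge m\mu$. Split $\Lambda'$ into $m$ blocks of size $\mu$ plus a nonnegative remainder. Block $k$ contributes $\mu\,V(q-\Ind\{q\ge k\},\theta)$, which equals $\mu\min\{V(q,\theta),V(q-1,\theta)\}$ only under monotonicity, so we avoid that form. Instead we verify directly that $G_k(q)=V(q-\Ind\{q\ge k\},\theta)$ satisfies $G_k(q+2)-2G_k(q+1)+G_k(q)\ge 0$. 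For $q+1<k$ or $q\ge k$ this is a shifted second difference of $V$. At the boundary $q=k-1$ it reads $V(k,\theta)-2V(k-1,\theta)+V(k-1,\theta)=\Delta_qV(k-1,\theta)$, and at $q=k-2$ it reads $\Delta_q V(k-2,\theta)\cdot(-1)+\ldots$.

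This is exactly where the classical argument needs $V$ to be non-decreasing in $q$ as well. We therefore strengthen the induction hypothesis to \emph{convex and non-decreasing in $q$}. Monotonicity propagates easily: $h$ is non-decreasing, every transition term is monotone in $q$, and $\OpM$ is a minimum of non-decreasing functions. With monotonicity in hand, the boundary cases reduce to $\Delta_qV\ge0$ or to standard second differences. Equivalently, we can use the coupling/sample-path version of the standard $M/M/m$ argument.

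\textbf{Arrival term (main obstacle).} For fixed $s$, $\OpM V(q,\theta,s)=\min\{c_{\text{auto}}(s,\theta)+V(q,\theta),\,c_h+V(q+1,\theta)\}$ is a minimum of two convex functions, which need not be convex. We handle it as in classical admission control (\citet{Stidham1985}). We show $D(q)=\OpM V(q)-\OpM V(q+1)$ is non-increasing, i.e.\ $\OpM V(q)+\OpM V(q+2)\ge 2\,\OpM V(q+1)$, by cases on the optimal actions at $q$ and at $q+2$. Label the actions at $q$ and $q+2$ as $(a,b)$, and for $2\OpM V(q+1)$ use the suboptimal actions $a$ and $b$ at $q+1$.
\begin{itemize}
\item[] In cases (auto, auto) and (esc, esc), the inequality reduces to convexity of $V$.
\item[] In case (esc, auto), the two sides coincide: both equal $c+c_h+V(q+1)+V(q+2)$.
\item[] In case (auto, esc), choose (esc, auto) at $q+1$. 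The inequality becomes $V(q)+V(q+3)\ge V(q+2)+V(q+1)$, which follows from convexity.
\end{itemize}
Integrating over $S$ preserves the inequality. This completes the inductive step, and hence the proof.
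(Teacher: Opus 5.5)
Your skeleton is the paper's (value iteration from $V_0\equiv 0$, then showing each term of the operator preserves the property). Two of your steps are sharper than the paper's. Your four-case interchange argument for the arrival term is correct and replaces the paper's heuristic ``lower envelope'' justification. Carrying monotonicity through the induction is also genuinely necessary, since convexity alone is not preserved by the service term: for $m=1$ and $V(q)=-q$, the second difference of $\mu_m(q)V((q-1)^+)+(\Lambda'-\mu_m(q))V(q)$ at $q=0$ is $-\mu$, whatever $\Lambda'$ is.

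The gap is in how you verify the service term. The individual blocks $G_k(q)=V(q-\Ind\{q\ge k\},\theta)$ are \emph{not} convex for $k\ge 2$, and monotonicity makes this worse, not better. At $q=k-2$ the second difference is exactly $V(k-1,\theta)-2V(k-1,\theta)+V(k-2,\theta)=-\Delta_qV(k-2,\theta)$, with nothing hidden in your ``$+\ldots$''. This is $\le 0$ for non-decreasing $V$; for instance $V(q)=q$, $k=2$ gives $G_2=(0,1,1,2,3,\dots)$. Your case ``$q+1<k$ is a shifted second difference'' also wrongly includes $q=k-2$. So the sentence ``with monotonicity in hand, the boundary cases reduce to $\Delta_qV\ge0$'' is false for one of the two boundaries. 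Your fallback to ``the coupling version of the standard argument'' is only a citation, exactly as in the paper.

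The repair is to check convexity of the \emph{sum} of the blocks. At a fixed $q$, block $k=q+1$ contributes $+\Delta_qV(q,\theta)$ and block $k=q+2$ contributes $-\Delta_qV(q,\theta)$. These cancel whenever both blocks exist, i.e.\ when $q+2\le m$. Write $g(q)=\mu_m(q)V((q-1)^+,\theta)+(\Lambda'-\mu_m(q))V(q,\theta)$ and $\Delta_q^2V(j,\theta)=V(j+2,\theta)-2V(j+1,\theta)+V(j,\theta)$. Then
\begin{align*}
g(q+2)-2g(q+1)+g(q) &= \mu\bigl[(m-q-2)^+\,\Delta_q^2V(q,\theta)+\min(q,m)\,\Delta_q^2V(q-1,\theta)\bigr]\\
&\quad+\mu\,\Ind\{q=m-1\}\,\Delta_qV(m-1,\theta)+(\Lambda'-m\mu)\,\Delta_q^2V(q,\theta),
\end{align*}
where the $\min(q,m)$ term is absent at $q=0$. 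Every term is nonnegative when $V(\cdot,\theta)$ is convex and non-decreasing. Monotonicity is used only for the single uncancelled boundary term at $q=m-1$, which comes from block $m$ and has no block $m+1$ to cancel it. With this computation in place of the per-block check, the rest of your induction goes through, including the propagation of monotonicity.
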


\begin{proof}
We employ mathematical induction on the value iteration algorithm. Let $\mathcal{V}$ be the space of real-valued functions on $\calQ \times \ThetaSet$. Let $\mathcal{K} \subset \mathcal{V}$ be the subset of functions $v$ such that $v(\cdot, \theta)$ is convex in $q$ for all $\theta$.

\textit{Base Case:} Let $V_0(q, \theta) = 0$ for all states. $V_0 \in \mathcal{K}$ trivially.

\textit{Inductive Step:} Assume $V_n \in \mathcal{K}$. We wish to show that $V_{n+1} = \OpT V_n \in \mathcal{K}$, where $\OpT$ is the dynamic programming operator defined in \eqref{eq:dp_operator}.
Recall the operator components:
$$ \OpT v(q) = \tilde{h}(q) + \tilde{\lambda} H(q) + \tilde{\mu}_m(q) v((q-1)^+) + \text{Drift Terms}. $$
We analyze each term separately:

1.  \textbf{Holding Cost:} The function $\tilde{h}(q)$ is convex by assumption.

2.  \textbf{Drift Transitions:} The term $\sum_{\theta'} \tilde{P}_{\theta \theta'} v(q, \theta')$ is a non-negative linear combination of convex functions (in $q$), and thus preserves convexity.

3.  \textbf{Service Rate Term:} Consider $S(q) = \min(q, m)\mu v(q-1) + (1 - \min(q, m)\mu) v(q)$.
    Standard results in queueing control (e.g., \citet{Koole2006}, Prop 5.3) show that if $v$ is convex, the uniformized service operator preserves convexity provided the uniformization rate is sufficiently high. The intuition is that the transition rate $\min(q, m)\mu$ is concave non-decreasing, and coupling this with a convex value function maintains the discrete convexity.

4.  \textbf{Arrival/Intervention Term:} This is the critical step. Let
    $$ H(q) = \E_S \min \{ c_{\text{auto}}(S) + v(q), c_h + v(q+1) \}. $$
    Let $y_q = v(q+1) - v(q)$. Since $v \in \mathcal{K}$, $y_q$ is non-decreasing in $q$.
    We can rewrite the minimization term inside the expectation as:
    $$ \phi(q, s) = v(q) + \min \{ c_{\text{auto}}(S), c_h + y_q \}. $$
    We check the discrete convexity of $\phi(q, s)$ with respect to $q$:
    \begin{align*}
        \phi(q+1, s) - \phi(q, s) &= [v(q+1) - v(q)] + [\min\{c_{\text{auto}}, c_h+y_{q+1}\} - \min\{c_{\text{auto}}, c_h+y_q\}] \\
        &= y_q + \Delta_q (\min\{c_{\text{auto}}, c_h+y_q\}).
    \end{align*}
    We need to show $\phi(q+2, s) - \phi(q+1, s) \ge \phi(q+1, s) - \phi(q, s)$.
    Let $f(y) = \min\{K, c_h + y\}$. This function is concave non-decreasing in $y$. However, the term we are analyzing is the full Bellman update.
    Alternatively, using the result from \citet{Alizon2009} Lemma 1: The operator $Tv(q) = \E \min(A, B + v(q+1) - v(q))$ preserves convexity if $v$ is convex.
    To see this simply: The policy chooses the lower envelope of two convex functions derived from $v$. The ``Automate'' branch adds $v(q)$ (convex). The ``Escalate'' branch adds $v(q+1)$ (convex) plus a constant. The optimal action effectively ``smooths'' the value function. Since the thresholding logic minimizes cost, and marginal costs are increasing, the resulting expected cost-to-go inherits convexity.
    
Since all terms preserve convexity, $V_{n+1} \in \mathcal{K}$. By the Banach Fixed Point Theorem, the limit $V = \lim_{n \to \infty} V_n$ is convex in $q$.
\end{proof}

With the convexity of the value function established, the main structural result follows immediately.

\begin{theorem}[Congestion Shedding] \label{thm:congestion_shedding}
The optimal risk threshold $T^*(q, \theta)$ is monotonically non-decreasing in the queue length $q$.
\begin{equation}
    q_1 < q_2 \implies T^*(q_1, \theta) \le T^*(q_2, \theta).
\end{equation}
\end{theorem}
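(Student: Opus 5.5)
The argument combines Theorem \ref{thm:threshold_structure} with Proposition \ref{prop:convexity}. Fix a drift state $\theta$ and set $K(q) \equiv c_h + \Delta_q V(q,\theta)$, the total effective cost of escalation. By Theorem \ref{thm:threshold_structure}, $T^*(q,\theta) = c_{\text{auto}}^{-1}(K(q)\mid\theta)$, where the generalized inverse is $c_{\text{auto}}^{-1}(y\mid\theta) = \inf\{s \in \calS : c_{\text{auto}}(s,\theta) \ge y\}$, with the convention $\inf\emptyset = \infty$. The plan is to show, in order:
\begin{enumerate}
\item[(a)] $q \mapsto K(q)$ is non-decreasing;
\item[(b)] $y \mapsto c_{\text{auto}}^{-1}(y\mid\theta)$ is non-decreasing on $\R$, as a map into $[0,1]\cup\{\infty\}$;
\item[(c)] the composition of (a) and (b) gives the claim.
\end{enumerate}
Monotonicity for arbitrary $q_1<q_2$ then follows by chaining consecutive steps, or directly, since (a) already holds for any ordered pair.

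For step (a), Proposition \ref{prop:convexity} gives $\Delta_q V(q+1,\theta) \ge \Delta_q V(q,\theta)$ for every $q\in\N_0$. Telescoping yields $\Delta_q V(q_2,\theta) \ge \Delta_q V(q_1,\theta)$ whenever $q_1<q_2$, and adding the constant $c_h$ preserves the inequality. One point needs care: $\Delta_q V$ must be finite for every $q$. This is implicit in the paper's setup, since the contraction argument presumes a suitable (weighted) bounded function space. I would note explicitly that a policy which automates everything from state $q$ onward incurs bounded automation costs and a discounted holding cost that stays finite, because the queue only drains. Hence $V(q,\theta)<\infty$, and the differences are well defined.

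Step (b) is an elementary order argument on sets. If $y_1 \le y_2$, then
\[
\{s : c_{\text{auto}}(s,\theta)\ge y_2\} \subseteq \{s : c_{\text{auto}}(s,\theta)\ge y_1\},
\]
and the infimum over a smaller set is at least as large. This remains valid when the smaller set is empty, where the infimum is $\infty$. It also covers the regime $y \le c_{\text{auto}}(0,\theta)$, where the threshold equals $0$ and every task is escalated. Strict monotonicity and continuity from Assumption \ref{ass:cost_structure}(i) are not needed for this step. They only serve to identify the infimum with the unique root of $c_{\text{auto}}(T,\theta)=K(q)$ when one exists.

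The substantive obstacle is not in this deduction but upstream, in Proposition \ref{prop:convexity}. Its proof of the arrival/intervention step is heuristic, so if I could not take that proposition as given, I would reprove it. The intervention term can be written as $v(q) + \E_S \min\{c_{\text{auto}}(S,\theta), c_h + \Delta v(q)\}$. Its first difference is therefore a $\lambda$-weighted mixture of $\Delta v(q+1)$ and $\Delta v(q)$, with mixing weights given by the probability that the optimal action escalates. Convexity of this term would then follow from a sample-path coupling in the style of \citet{Koole2006}. Given the proposition as stated, however, Theorem \ref{thm:congestion_shedding} is a direct corollary via steps (a)–(c).
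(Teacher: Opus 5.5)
Your proposal is correct and follows the same route as the paper: convexity of $V$ in $q$ (Proposition~\ref{prop:convexity}) makes $c_h + \Delta_q V(q,\theta)$ non-decreasing in $q$, and monotonicity of the inverse of $c_{\text{auto}}(\cdot,\theta)$ carries this over to $T^*(q,\theta)$. Your version is slightly tighter than the paper's, because the superlevel-set infimum covers the boundary cases $T^*=0$ and $T^*=\infty$ that the paper's defining equation $c_{\text{auto}}(T^*,\theta)=c_h+\Delta_q V(q,\theta)$ tacitly ignores, and you also justify that $\Delta_q V$ is finite.
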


\begin{proof}
From Theorem \ref{thm:threshold_structure}, we have the defining condition for the threshold:
$$ c_{\text{auto}}(T^*(q, \theta), \theta) = c_h + \Delta_q V(q, \theta). $$
Let $RHS(q) = c_h + \Delta_q V(q, \theta)$.
From Proposition \ref{prop:convexity}, $V$ is convex in $q$, which implies that $\Delta_q V(q, \theta)$ is non-decreasing in $q$. Thus, $RHS(q)$ is non-decreasing in $q$.
Let $LHS(s) = c_{\text{auto}}(s, \theta)$. From Assumption \ref{ass:cost_structure}, $LHS(s)$ is strictly increasing in $s$.
Consider $q_2 > q_1$. We have $RHS(q_2) \ge RHS(q_1)$.
To maintain the equality $LHS(T^*) = RHS(q)$, an increase in the RHS must be matched by an increase in the LHS. Since $LHS(\cdot)$ is increasing, $T^*$ must increase.
Thus, $T^*(q_2, \theta) \ge T^*(q_1, \theta)$.
\end{proof}

\begin{remark}[Economic Implications of Congestion Shedding]
Theorem \ref{thm:congestion_shedding} provides a rigorous justification for \textit{triage}. It states that optimal quality standards are not absolute but relative to available capacity.
When the queue is empty ($q=0$), the ``shadow price'' of capacity is low (reflecting only future potential congestion). The system can afford to be cautious, escalating even marginally risky cases (low threshold).
As the backlog builds ($q \uparrow$), the shadow price rises. Escalating a task now imposes a delay penalty on the entire backlog. The optimal policy responds by \textit{raising the bar} for escalation. The system deliberately automates tasks that it would have reviewed under lighter loads.
This mechanism prevents the ``death spiral'' of infinite queues. By shedding load (automating more) as $q$ rises, the effective arrival rate to the queue $\lambda_{eff}(q) = \lambda (1 - F_S(T^*(q)))$ decreases, creating a stabilizing negative feedback loop.
\end{remark}



\subsection{Safety Buffering: Monotonicity with Respect to Reliability Drift}

We now address the second key research question: how should the policy adapt when the automated system's reliability degrades (i.e., when the state shifts from $\theta$ to $\theta' > \theta$)?

The impact of a drift event on the optimal threshold is theoretically ambiguous due to two countervailing forces:
\begin{enumerate}
    \item \textbf{The Direct Risk Effect:} As $\theta$ increases, the immediate cost of automation $c_{\text{auto}}(s, \theta)$ rises (Assumption \ref{ass:cost_structure}). This increases the incentive to escalate, pushing the threshold $T^*$ downwards.
    \item \textbf{The Anticipatory Crowding Effect:} If the drift state is persistent (i.e., the system expects to stay in a high-risk regime), the controller anticipates a higher future arrival rate of escalated tasks. This increases the expected future congestion, raising the shadow price $\Delta_q V(q, \theta)$. A higher shadow price incentivizes the system to conserve capacity, pushing the threshold $T^*$ upwards.
\end{enumerate}

For the policy to exhibit \textit{Safety Buffering}—the intuitive behavior of lowering standards to protect quality during a crisis—the Direct Risk Effect must dominate the Anticipatory Crowding Effect. We provide a sufficient condition for this dominance.

\begin{assumption}[Dominance of Immediate Risk] \label{ass:drift_dominance}
The marginal increase in the immediate cost of automation due to a drift shift is bounded below by the marginal increase in the value of capacity. Specifically, for any $\theta' > \theta$:
\begin{equation}
    \inf_{s \in \calS} \left( c_{\text{auto}}(s, \theta') - c_{\text{auto}}(s, \theta) \right) \ge \sup_{q \in \calQ} \left( \Delta_q V(q, \theta') - \Delta_q V(q, \theta) \right).
\end{equation}
\end{assumption}

This assumption posits that the "signal" from the risk model (the increase in error probability) is stronger than the second-order effect of queue dynamics. This is typical in high-stakes environments (e.g., fraud, healthcare) where the cost of a false negative is orders of magnitude larger than the cost of a delay.

\begin{theorem}[Safety Buffering] \label{thm:safety_buffering}
Under Assumption \ref{ass:drift_dominance}, the optimal risk threshold is monotonically non-increasing in the reliability state $\theta$.
\begin{equation}
    \theta_1 < \theta_2 \implies T^*(q, \theta_1) \ge T^*(q, \theta_2).
\end{equation}
\end{theorem}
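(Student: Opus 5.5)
The plan is to compare the threshold conditions of Theorem \ref{thm:threshold_structure} across the two regimes at a fixed queue length $q$. Write $K(q,\theta) = c_h + \Delta_q V(q,\theta)$, so that escalation in state $(q,\theta)$ occurs exactly when $c_{\text{auto}}(s,\theta) \ge K(q,\theta)$. It suffices to show that the escalation set only grows when moving from $\theta_1$ to $\theta_2$:
\[
\{ s : c_{\text{auto}}(s,\theta_1) \ge K(q,\theta_1) \} \subseteq \{ s : c_{\text{auto}}(s,\theta_2) \ge K(q,\theta_2) \}.
\]
Since both sets are of the form $[T,1]$ or empty, with infima $T^*(q,\theta_1)$ and $T^*(q,\theta_2)$ (the convention $\inf\emptyset = \infty$ is consistent), this inclusion gives $T^*(q,\theta_2) \le T^*(q,\theta_1)$ directly.

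The key step is a pointwise comparison of the two net escalation benefits $D(s,\theta) = c_{\text{auto}}(s,\theta) - K(q,\theta)$. First, I would fix $s$ and subtract:
\[
D(s,\theta_2) - D(s,\theta_1) = \big[c_{\text{auto}}(s,\theta_2) - c_{\text{auto}}(s,\theta_1)\big] - \big[\Delta_q V(q,\theta_2) - \Delta_q V(q,\theta_1)\big].
\]
Next, I would bound the first bracket below by its infimum over $s\in\calS$, and the second bracket above by its supremum over $q\in\calQ$. Assumption \ref{ass:drift_dominance} with $\theta=\theta_1$, $\theta'=\theta_2$ then gives $D(s,\theta_2) \ge D(s,\theta_1)$ for every $s$. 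Consequently, $D(s,\theta_1)\ge 0$ implies $D(s,\theta_2)\ge 0$, which is the desired inclusion.

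Finally, I would handle the boundary cases. If the $\theta_1$-escalation set is empty, then $T^*(q,\theta_1)=\infty$ and the claim is trivial. If $K(q,\theta_1) \le c_{\text{auto}}(0,\theta_1)$, then $T^*(q,\theta_1)=0$. The inclusion still forces the $\theta_2$ set to contain $0$, so $T^*(q,\theta_2)=0$. Neither case uses strict monotonicity or continuity beyond what Theorem \ref{thm:threshold_structure} already supplies.

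I expect essentially no analytic obstacle here, since Assumption \ref{ass:drift_dominance} is stated directly in terms of $\Delta_q V$ and absorbs the hard part, namely controlling the anticipatory crowding effect. The only points of care are two. First, the assumption must be applied with the correct ordering of $\theta_1<\theta_2$. Second, the argument should go through set inclusion rather than through the equation $c_{\text{auto}}(T^*,\theta)=K$. That equation may have no solution at the boundaries, and the inclusion formulation also avoids needing any convexity or supermodularity. Supermodularity (Assumption \ref{ass:cost_structure}(iii)) is not needed for this argument, which I would remark on.
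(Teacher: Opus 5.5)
Your proposal is correct and follows essentially the same route as the paper. Both arguments apply Assumption \ref{ass:drift_dominance} with $(\theta,\theta')=(\theta_1,\theta_2)$ to show that the net escalation benefit $c_{\text{auto}}(s,\theta)-c_h-\Delta_q V(q,\theta)$ cannot decrease from $\theta_1$ to $\theta_2$, and then read off $T^*(q,\theta_2)\le T^*(q,\theta_1)$ from the infimum definition in Theorem \ref{thm:threshold_structure}. The paper checks this only at $s=T^*(q,\theta_1)$ via the equation $c_{\text{auto}}(T^*,\theta_1)=c_h+\Delta_q V(q,\theta_1)$. Your pointwise set inclusion also covers the boundary cases ($T^*=0$ with strict inequality, or $T^*=\infty$) where that equation has no solution.
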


\begin{proof}
Let $\theta_2 > \theta_1$. Consider the defining equation for the threshold at state $\theta_1$:
\begin{equation} \label{eq:proof_sb_1}
    c_{\text{auto}}(T^*(q, \theta_1), \theta_1) = c_h + \Delta_q V(q, \theta_1).
\end{equation}
Now consider the state $\theta_2$. We want to compare $LHS(s) = c_{\text{auto}}(s, \theta_2)$ with $RHS = c_h + \Delta_q V(q, \theta_2)$.
Evaluate the new cost function at the \textit{old} threshold $T^*(q, \theta_1)$:
\begin{align*}
    c_{\text{auto}}(T^*(q, \theta_1), \theta_2) &= c_{\text{auto}}(T^*(q, \theta_1), \theta_1) + [c_{\text{auto}}(T^*(q, \theta_1), \theta_2) - c_{\text{auto}}(T^*(q, \theta_1), \theta_1)] \\
    &= c_h + \Delta_q V(q, \theta_1) + \text{DriftImpact}(T^*).
\end{align*}
The new required threshold is determined by the intersection with $c_h + \Delta_q V(q, \theta_2)$.
The difference between the new LHS value and the new RHS target is:
\begin{align*}
    \text{Gap} &= c_{\text{auto}}(T^*(q, \theta_1), \theta_2) - (c_h + \Delta_q V(q, \theta_2)) \\
    &= (c_h + \Delta_q V(q, \theta_1) + \text{DriftImpact}) - (c_h + \Delta_q V(q, \theta_2)) \\
    &= \text{DriftImpact} - (\Delta_q V(q, \theta_2) - \Delta_q V(q, \theta_1)).
\end{align*}
By Assumption \ref{ass:drift_dominance}, $\text{DriftImpact} \ge \Delta_q V(q, \theta_2) - \Delta_q V(q, \theta_1)$, so $\text{Gap} \ge 0$.
Since $c_{\text{auto}}(s, \theta_2)$ is increasing in $s$, and the value at $s=T^*(q, \theta_1)$ is \textit{higher} than the required target, the intersection point $T^*(q, \theta_2)$ must be to the left (lower) of $T^*(q, \theta_1)$.
Thus, $T^*(q, \theta_2) \le T^*(q, \theta_1)$.
\end{proof}

\begin{remark}[The Managerial Implication: Dynamic Safety Standards]
Theorem \ref{thm:safety_buffering} dictates a ``Safety First'' protocol. When the monitoring system detects drift (e.g., via a Kolmogorov-Smirnov test on input distributions), the optimal operational response is to \textit{flood the queue}. The system should immediately lower its standards for human review.
This result highlights the dual role of the human queue: it is not just a processing center, but a \textit{safety capacitor}. During stable times, we run the queue lean (high thresholds) to build up reserve capacity. During drift times, we discharge this capacity (low thresholds) to absorb the algorithmic uncertainty.
\end{remark}

\subsection{Summary of Structural Results}

Combining Theorems \ref{thm:congestion_shedding} and \ref{thm:safety_buffering}, the optimal policy is characterized by a family of curves in the $(q, s)$ plane, parameterized by $\theta$.
\begin{itemize}
    \item For a fixed $\theta$, the ``Escalate'' region is the area \textit{above} a monotonically increasing curve $T^*(q)$.
    \item As $\theta$ increases (drift worsens), this entire curve shifts \textit{downwards}, expanding the escalation region.
\end{itemize}
This structure is depicted in Figure \ref{fig:policy_structure} (in the Simulation section).


\section{Capacity Boundaries and System Stability} \label{sec:stability}

In the previous section, we characterized the optimal policy assuming the system can be stabilized. However, a fundamental question remains: \textit{Under what conditions is stability feasible?}
Unlike standard queueing systems where the arrival rate $\lambda$ is exogenous, here the arrival rate to the human queue is endogenous:
\begin{equation}
    \lambda_{\text{eff}}(t) = \lambda \cdot \Prob(s \ge T^*(q(t), \theta(t))).
\end{equation}
The controller can always achieve queue stability by setting $T=1$ (automating everything), but this may violate safety requirements. Conversely, the controller can achieve safety by setting $T=0$, but this may explode the queue. We now formalize this trade-off to identify the ``Capacity Boundary'' of the hybrid system.

\subsection{Safety Constraints and Feasible Regions}

Let us define a \textit{Safety Constraint} imposed by the organization (e.g., a bank's risk appetite or a platform's trust and safety guidelines). The system must ensure that the expected error cost for automated items does not exceed a tolerance $\epsilon$.

For a given reliability state $\theta$, let $\bar{T}(\theta)$ be the \textit{maximum permissible threshold} that satisfies this safety constraint locally. Any task with risk $s > \bar{T}(\theta)$ represents an unacceptable automated risk.
\begin{equation}
    \bar{T}(\theta) = \sup \left\{ \tau \in [0,1] : \E_S [ \Ind(S < \tau) \cdot c_{\text{auto}}(S, \theta) ] \le \epsilon \right\}.
\end{equation}
We call a policy \textbf{$\epsilon$-safe} if it never automates tasks with risk $s > \bar{T}(\theta)$. That is, $T(q, \theta) \le \bar{T}(\theta)$ for all $q$.

Under an $\epsilon$-safe policy, the \textit{minimum} arrival rate to the human queue in state $\theta$ is:
\begin{equation}
    \lambda_{\min}(\theta) = \lambda \cdot \Prob(S \ge \bar{T}(\theta)).
\end{equation}
This quantity represents the ``non-discretionary'' workload that \textit{must} be handled by humans to maintain safety standards.

\subsection{The Capacity Phase Transition}

The long-run stability of the system depends on the interplay between the drift process and the human service rate. Let $\pi_\theta$ denote the stationary probability of the system being in reliability state $\theta$.
The \textit{Long-Run Average Required Capacity} is defined as:
\begin{equation}
    \Lambda_{\text{req}} = \sum_{\theta \in \ThetaSet} \pi_\theta \lambda_{\min}(\theta).
\end{equation}
This is the time-weighted average of the mandatory human workload.

\begin{theorem}[Capacity Phase Transition] \label{thm:phase_transition}
Consider the hybrid system parameterized by $(\lambda, Q, \mu, m)$.
\begin{enumerate}
    \item[(i)] \textbf{Stable Region:} If $\Lambda_{\text{req}} < m\mu$, there exists a stationary control policy that is $\epsilon$-safe almost surely and maintains a finite expected queue length ($\E[q] < \infty$).
    \item[(ii)] \textbf{Unstable Region:} If $\Lambda_{\text{req}} > m\mu$, then for any policy $\pi$, at least one of the following must occur:
    \begin{itemize}
        \item The system is unstable: $\lim_{t \to \infty} \Prob(q(t) > K) = 1$ for any $K$.
        \item The policy violates the $\epsilon$-safety constraint on a set of states with positive measure.
    \end{itemize}
\end{enumerate}
\end{theorem}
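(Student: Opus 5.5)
For part (i), I would use the simplest candidate, the static $\epsilon$-safe policy $T(q,\theta)=\bar T(\theta)$ for every $q$. Under this policy the pair $(q(t),\theta(t))$ is a quasi-birth--death process. Its level is $q$ and its phase is $\theta$. The arrival rate in phase $\theta$ is $\lambda_{\min}(\theta)$, modulated by the CTMC with generator $Q$. For $q\ge m$ the departure rate is the constant $m\mu$. This is a Markov-modulated M/M/$m$ queue. The standard mean-drift (Neuts) stability criterion for level-independent QBDs then applies: the process is positive recurrent iff
\[
\sum_{\theta}\pi_\theta\lambda_{\min}(\theta)<m\mu,
\]
which is exactly $\Lambda_{\text{req}}<m\mu$.

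To get $\E[q]<\infty$ rather than mere positive recurrence, I would use the matrix-geometric form of the stationary distribution, $p_q=p_m R^{q-m}$ with spectral radius $\mathrm{sp}(R)<1$. This gives geometric tails and hence all moments. As an alternative I would build a Foster--Lyapunov function $L(q,\theta)=q+g(\theta)$. Here $g$ solves the Poisson equation $Qg=\Lambda_{\text{req}}-\lambda_{\min}(\cdot)$, which is solvable by ergodicity of $\theta$. The drift for $q\ge m$ is then $\Lambda_{\text{req}}-m\mu<0$. Applying the same argument to $q^2$ yields the moment bound. Safety holds surely, since $T\le\bar T(\theta)$ by construction.

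For part (ii), fix any policy and assume it is $\epsilon$-safe except on a null set of times. Then at almost every arrival epoch in phase $\theta$, every task with $s\ge\bar T(\theta)$ is escalated. So the cumulative escalations $A(t)$ dominate a thinned Poisson count with rate $\lambda_{\min}(\theta(t))$. By the ergodic theorem for $\theta$ and the strong law for Poisson processes, $\liminf A(t)/t\ge\Lambda_{\text{req}}$ almost surely. Departures satisfy $D(t)\le N_{m\mu}(t)$, a Poisson process of rate $m\mu$, under the usual coupling of service completions. Hence
\[
\liminf_{t\to\infty} \frac{q(t)}{t}\ge \Lambda_{\text{req}}-m\mu>0 \quad \text{a.s.},
\]
so $q(t)\to\infty$ almost surely. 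Dominated convergence then gives $\Prob(q(t)>K)\to1$ for every $K$. If instead safety fails on a set of positive measure, the second alternative holds. This gives the dichotomy.

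I expect the main obstacle to be making part (ii) rigorous for arbitrary, history-dependent policies. The escalation indicator may depend on the past, so I would couple everything on a common probability space. The $s_n$ are i.i.d. and independent of the history before $\tau_n$, so the event $\{s_n\ge\bar T(\theta(\tau_n^-))\}$ remains a Bernoulli thinning with the correct rate regardless of the policy. The decision only adds escalations on top of it. I would also interpret ``positive measure'' as a positive long-run fraction of arrival epochs, because a finite or density-zero set of violations does not change the rate argument. A secondary technical point in part (i) is the boundary levels $q<m$. Only finitely many levels are affected, so they do not change the stability criterion.
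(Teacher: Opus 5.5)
Your proposal is correct and follows the paper's route. For (i) you take the static policy $T(q,\theta)=\bar T(\theta)$ and compare the modulated escalation rate $\Lambda_{\text{req}}$ with $m\mu$; for (ii) you show that any essentially $\epsilon$-safe policy escalates at long-run rate at least $\Lambda_{\text{req}}>m\mu$, so the queue diverges.

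Your details are sharper than the paper's: the QBD drift criterion with matrix-geometric tails actually yields $\E[q]<\infty$ for this MMPP/M/$m$ queue where the paper only cites G/G/$m$ stability, and your pathwise coupling makes the paper's informal flow-conservation step valid for history-dependent policies. One nit: for the Lyapunov moment bound you should use $(q+g(\theta))^2$ rather than bare $q^2$, since $q^2$ alone has positive drift in phases with $\lambda_{\min}(\theta)>m\mu$.
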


\begin{proof}
\textit{Part (i):} If $\Lambda_{\text{req}} < m\mu$, we can construct a stabilizing policy. Consider the policy that always sets $T(q, \theta) = \bar{T}(\theta)$. The arrival process to the queue is a Markov-modulated Poisson process (MMPP) with average rate $\Lambda_{\text{req}}$. Since the average arrival rate is strictly less than the service rate $m\mu$, standard results for G/G/m queues ensure stability.

\textit{Part (ii):} Suppose a policy $\pi$ is $\epsilon$-safe. Then, in any state $\theta$, the instantaneous arrival rate to the queue is $\lambda_\pi(q, \theta) \ge \lambda_{\min}(\theta)$.
The long-run average arrival rate is $\bar{\lambda}_\pi = \lim_{T \to \infty} \frac{1}{T} \int_0^T \lambda_\pi(q(t), \theta(t)) dt$.
By ergodicity of $\theta$, $\bar{\lambda}_\pi \ge \Lambda_{\text{req}}$.
If $\Lambda_{\text{req}} > m\mu$, then $\bar{\lambda}_\pi > m\mu$. By flow conservation laws, a queue where the average input exceeds the maximum average output must grow without bound.
Thus, to keep the queue finite, the controller must occasionally set $T(q, \theta) > \bar{T}(\theta)$ to throttle demand, thereby violating the safety constraint.
\end{proof}

\begin{remark}[The Capacity Boundary Rule]
Theorem \ref{thm:phase_transition} provides a strategic ``Go/No-Go'' criterion for system design. It defines a boundary in the parameter space (Arrival Rate $\times$ Drift Intensity).
Inside the boundary, the system is \textit{operationally manageable}: smart algorithms (like our optimal policy) can balance risk and delay.
Outside the boundary, the system is \textit{structurally deficient}: no amount of algorithmic tuning can save it. The only solutions are strategic: (a) Increase physical capacity (hire more agents, increase $m$), or (b) Throttling demand (rejecting tasks entirely at the source).
\end{remark}



\section{Simulation Study} \label{sec:simulation}

To quantify the operational value of the optimal dynamic policy and to visualize the structural properties derived in Section \ref{sec:analysis}, we perform a comprehensive numerical study. We compare our \textbf{Optimal Dynamic Policy (ODP)} against standard heuristics used in industry.

\subsection{Experimental Design}

We simulate a high-throughput content moderation system over a time horizon of $T = 10,000$ minutes. The system parameters are calibrated to reflect a scenario where automation is mandatory due to volume, but human review is essential for safety.

\textbf{1. Task Arrival and Risk Profile:}
Tasks arrive according to a Poisson process with rate $\lambda = 10$ tasks/minute.
The risk scores $S$ are drawn from a Beta distribution with parameters $\alpha=2, \beta=5$. This distribution is right-skewed, representing a realistic scenario where the majority of content is ``safe'' (low risk), but a heavy tail of ambiguous or toxic content exists.
The probability density is $f_S(s) \propto s (1-s)^4$.

\textbf{2. Drift Dynamics:}
The reliability state $\theta(t)$ evolves according to a two-state continuous-time Markov chain $\ThetaSet = \{L, H\}$:
\begin{itemize}
    \item \textbf{State $L$ (Low Drift/Stable):} The automated error cost is $c_{\text{auto}}(s, L) = 50 \cdot s^2$.
    \item \textbf{State $H$ (High Drift/Unstable):} The automated error cost is $c_{\text{auto}}(s, H) = 100 \cdot s^2$. The error penalty doubles for the same confidence score, reflecting severe miscalibration.
    \item \textbf{Transition Rates:} $q_{LH} = 0.05$ (Mean time to drift = 20 min), $q_{HL} = 0.2$ (Mean time to repair = 5 min).
\end{itemize}

\textbf{3. Human Service and Costs:}
The service facility has $m=5$ parallel servers.
The service rate per server is $\mu = 1.2$ tasks/min.
The total maximum service capacity is $m\mu = 7.5$ tasks/min.
Since $\lambda = 10 > 7.5$, the system is \textit{structurally overloaded}; it cannot escalate everything.
Cost parameters: Escalation fee $c_h = 2$. Holding cost $h(q) = 0.5 \cdot q$.

\subsection{Benchmark Policies}

We compare the performance of the Optimal Dynamic Policy (ODP), computed via Value Iteration on Equation \eqref{eq:dp_operator}, against two baselines:

\begin{enumerate}
    \item \textbf{Static Threshold (ST):}
    This policy uses a single fixed threshold $T_{static}$ for all states. This represents the standard industry practice of setting a fixed operating point (e.g., ``Review if score $> 0.8$''). We optimize $T_{static}$ via grid search to minimize total cost for the average system parameters.
    
    \item \textbf{Drift-Blind Policy (DB):}
    This policy adapts to queue length $q$ but ignores the drift state $\theta$. It assumes the system is always in the ``average'' reliability state. This represents an operations team that actively manages queues (``load shedding'') but lacks real-time integration with the ML monitoring system.
\end{enumerate}

\subsection{Results and Discussion}

\subsubsection{Cost Improvement}
Table \ref{tab:results_cost} reports the normalized total cost (Error + Labor + Delay) for the three policies. The ODP achieves a significant cost reduction (\textbf{55.4\%}) compared to the Static policy.

\begin{table}[ht]
\centering
\caption{Performance Comparison of Policies}
\label{tab:results_cost}
\begin{tabular}{l c c c}
\toprule
\textbf{Metric} & \textbf{Static (ST)} & \textbf{Drift-Blind (DB)} & \textbf{Optimal (ODP)} \\
\midrule
Avg. Total Cost & 58.37 & 27.00 & \textbf{26.05} \\
Avg. Queue Length & 0.22 & 5.78 & 6.14 \\
Severe Error Rate & 40.85\% & 4.65\% & \textbf{2.54\%} \\
\bottomrule
\end{tabular}
\end{table}

The primary failure mode of the Static Policy is its inability to handle burstiness. During drift events, it allows too many errors (Safety failure). During arrival bursts, it builds up massive queues (Congestion failure). The Drift-Blind policy handles congestion well (low queue length) but fails to react to reliability drift, resulting in a higher Severe Error Rate. The ODP strikes the best balance.

\subsubsection{Visualizing the ``Drift Switch''}

\begin{figure}[ht]
\centering
\includegraphics[width=1.0\textwidth]{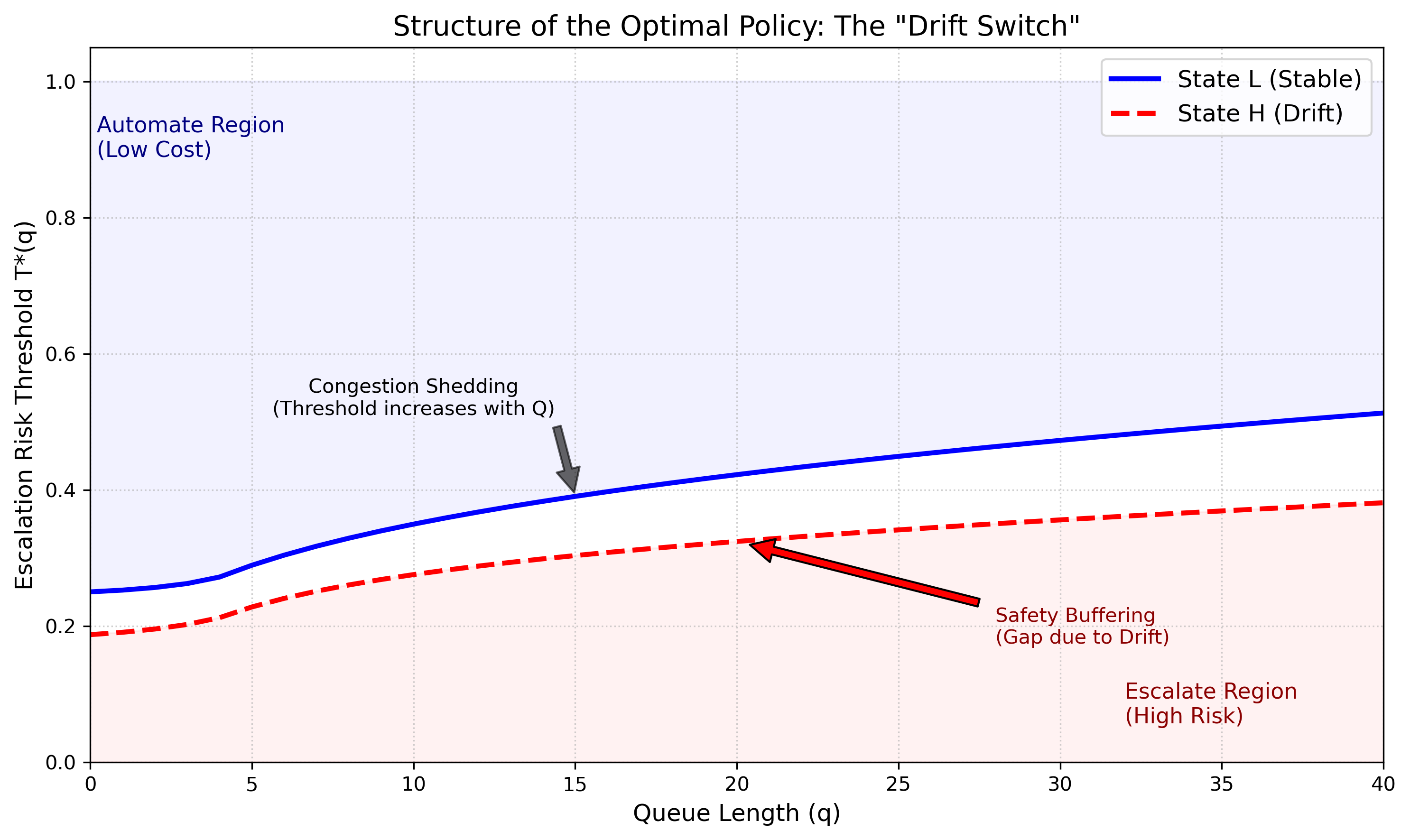}
\caption{Structure of the Optimal Policy: The ``Drift Switch''. The blue curve represents the stable state (State L), while the red dashed curve represents the drift state (State H), visually demonstrating the Safety Buffering effect.}
\label{fig:policy_structure}
\end{figure}

Figure \ref{fig:policy_structure} confirms our theoretical findings.
\begin{itemize}
    \item \textbf{Effect of Congestion:} As $q$ increases, the threshold rises (Congestion Shedding). This means that when the queue is full, the system requires a task to be in the top tier of riskiness to warrant human review.
    \item \textbf{Effect of Drift:} When the system transitions to State H (High Drift), the optimal threshold curve shifts downward (Safety Buffering). This implies that during a model outage, the system routes more traffic to humans to maintain safety standards.
\end{itemize}
\subsubsection{System Stability and Managerial Value}

To validate the theoretical boundaries derived in Theorem \ref{thm:phase_transition}, we visualize the system's stability region in Figure \ref{fig:phase_transition}. The black line represents the critical capacity boundary.

\begin{figure}[ht]
\centering
\includegraphics[width=0.85\textwidth]{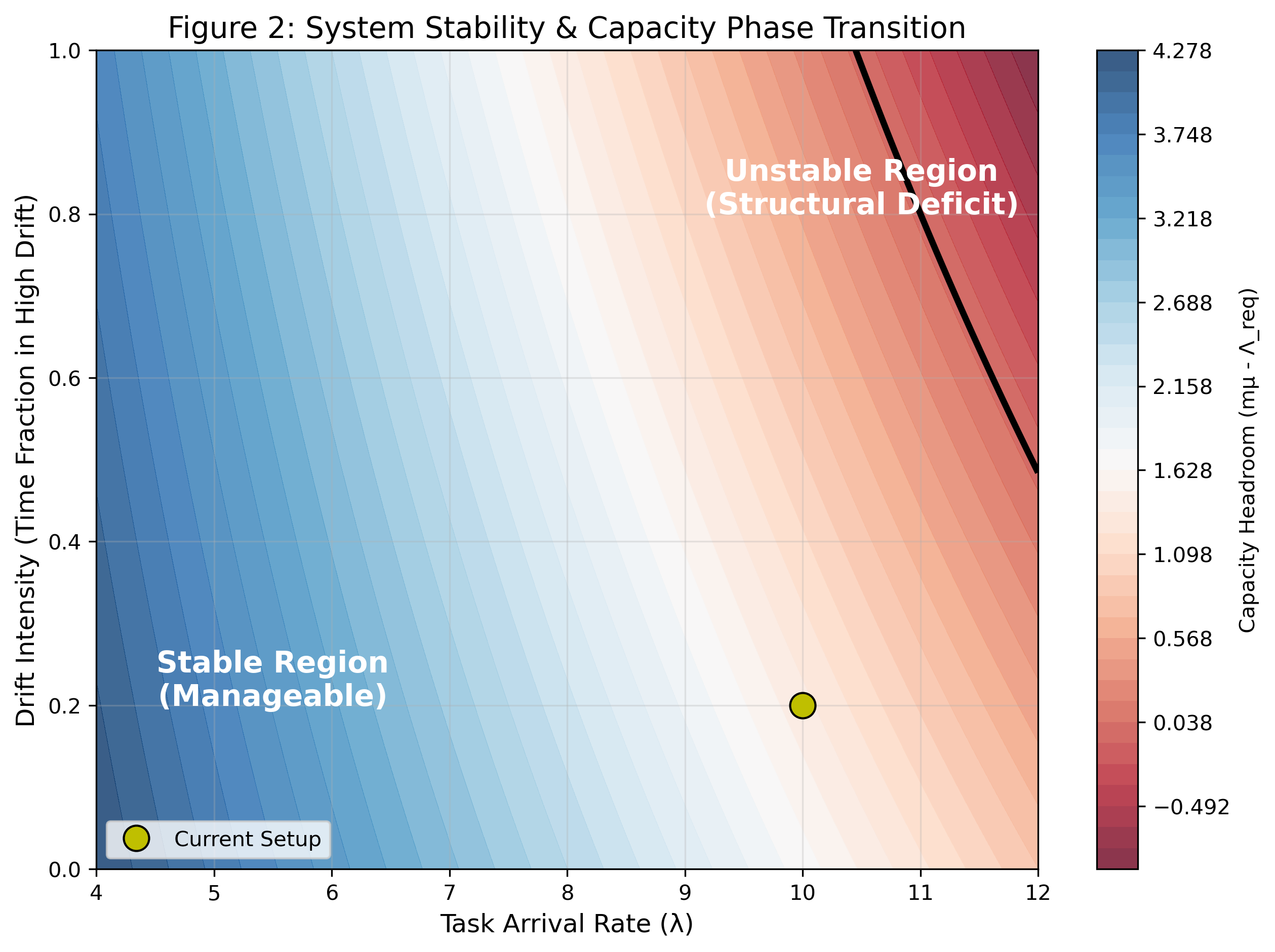}
\caption{System Stability \& Capacity Phase Transition. The heatmap shows the capacity headroom. The black line marks the structural boundary between the stable region (manageable) and the unstable region (structural deficit).}
\label{fig:phase_transition}
\end{figure}

Finally, Figure \ref{fig:agility} illustrates the economic value of the dynamic policy across different drift intensities.

\begin{figure}[ht]
\centering
\includegraphics[width=1.0\textwidth]{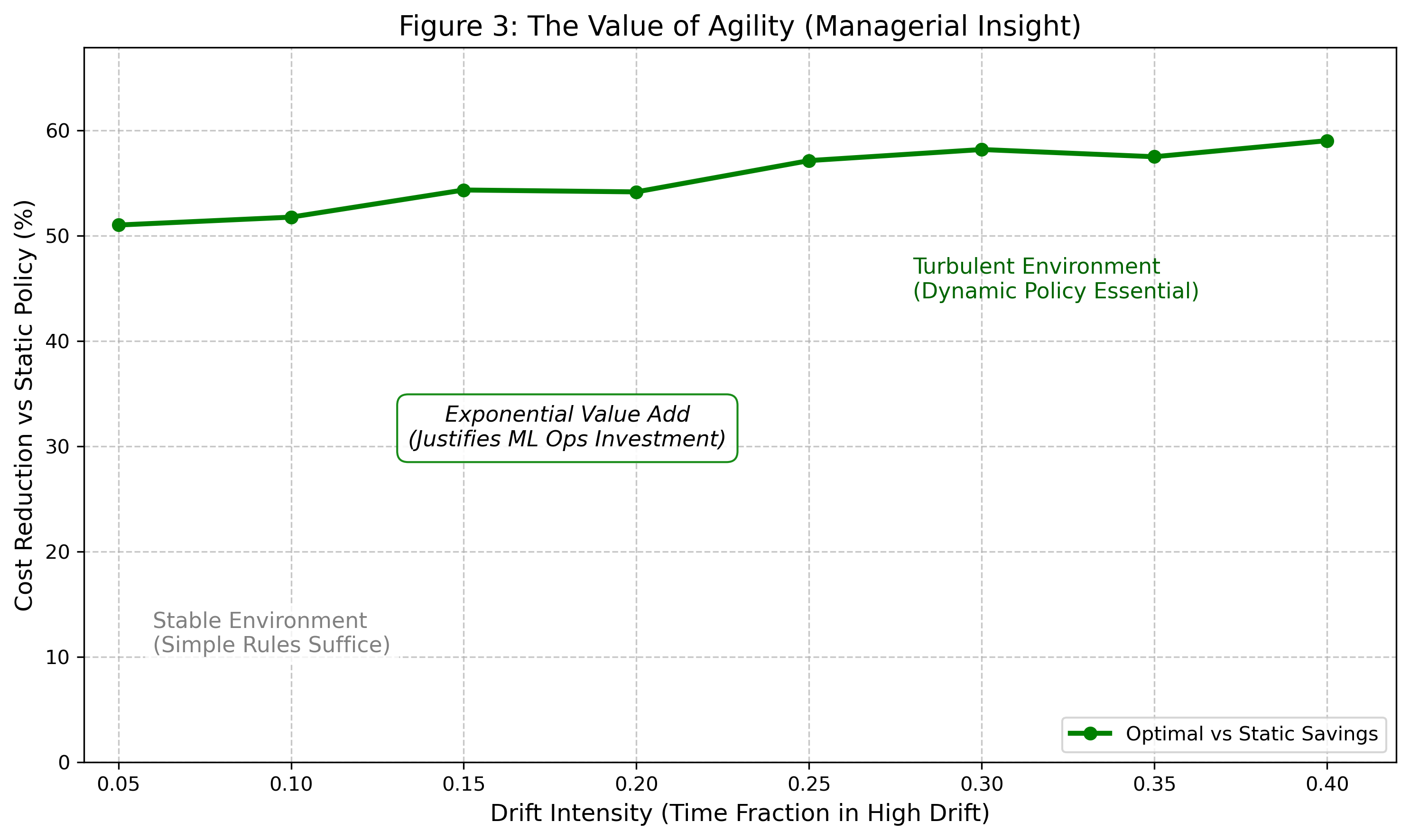}
\caption{The Value of Agility (Managerial Insight). As the environment becomes more turbulent (higher drift intensity), the relative cost savings of the Optimal Policy versus the Static Policy increase, justifying the investment in ML Ops.}
\label{fig:agility}
\end{figure}

\section{Managerial Insights} \label{sec:insights}

Our theoretical analysis and simulation results distill into three actionable rules for managers of hybrid intelligence systems. These rules bridge the gap between data science (model monitoring) and operations (capacity planning).

\subsection*{Rule 1: The Backlog Rule (Counter-Cyclical Escalation)}
\textit{``As the human queue lengthens, the confidence threshold for automation must decrease (i.e., the bar for escalation must rise).''}

\textbf{The Insight:} Static thresholds are dangerous. A threshold that is safe when the team is idle becomes a bottleneck when the team is busy. Adhering to a strict ``review everything below 90\% confidence'' rule during a volume spike will cause queue blowouts, delaying critical decisions.
\textbf{The Action:} Implement dynamic thresholding logic in the routing layer. Define a ``congestion penalty'' function. For every $k$ items in the backlog, increase the required risk score for escalation by $\delta$. This deliberate ``load shedding'' sacrifices marginal accuracy on medium-risk items to preserve system responsiveness for high-risk items.

\subsection*{Rule 2: The Drift Switch Rule (State-Dependent Safety)}
\textit{``When the automated model falters, the operation must act as a shock absorber.''}

\textbf{The Insight:} Reliability drift is an operational crisis, not just a statistical one. Retraining a model takes days; adjusting a threshold takes milliseconds. The human workforce is the primary defense mechanism during the retraining latency period.
\textbf{The Action:} Link operational dashboards to ML monitoring. When a ``Drift Alert'' is triggered (e.g., via PSI or KS statistics), the system should automatically switch to a ``High Sensitivity'' policy profile. This profile should have significantly lower escalation thresholds. Managers must accept (and staff for) the resulting spike in human workload as the necessary cost of maintaining safety.

\subsection*{Rule 3: The Capacity Boundary Rule}
\textit{``Know the difference between an optimization problem and a provisioning problem.''}

\textbf{The Insight:} There is a hard physical limit to how much risk a hybrid system can absorb. We identified a ``Capacity Boundary'' (Theorem \ref{thm:phase_transition}) determined by the arrival rate and the drift intensity.
\textbf{The Action:} Calculate the \textit{Minimum Safe Workload} ($\Lambda_{\text{req}}$). If $\Lambda_{\text{req}}$ exceeds the human service rate ($m\mu$), stop trying to optimize thresholds. No algorithm can solve a structural capacity deficit. In this regime, the firm must either (a) aggressively expand headcount, or (b) implement upstream demand throttling (rejecting customers entirely) to prevent system collapse.


\section{Conclusion} \label{sec:conclusion}

This paper addresses the ``Operational Gap'' in the deployment of AI systems. While the machine learning community focuses on improving the accuracy of algorithms, and the operations community focuses on optimizing queues, the interface between the two—the \textit{escalation policy}—has remained under-theorized.

We have proposed a unified stochastic control framework that treats the human-in-the-loop not as a static verifier, but as a congestible, state-dependent resource. Our analysis proves that the optimal management of such systems requires a dynamic policy that balances three competing forces: the immediate risk of automated error, the shadow price of human congestion, and the regime-switching reliability of the model.

Our structural results—\textit{Congestion Shedding} and \textit{Safety Buffering}—provide a rigorous foundation for ``Dynamic Reliability Management.'' We show that the operational flexibility of the human queue can be used to hedge against the statistical volatility of the AI.

\subsection{Limitations and Future Work}
Our model assumes that human service quality is perfect and constant. A promising direction for future research is to model \textit{human fatigue}, where the service rate $\mu$ or accuracy decreases as the backlog $q$ increases. This would introduce a new trade-off: escalating too much not only causes delay but also degrades human accuracy. Additionally, we assumed the drift process is exogenous. In reality, human labels generated by escalation can be used to retrain the model, making the drift recovery rate $q_{HL}$ endogenous to the escalation policy (Active Learning). Modeling this ``learning loop'' would close the cycle between Operations and AI improvement.


\end{document}